\newtheorem{theorem}{Theorem}
\newtheorem{lemma}{Lemma}
\theoremstyle{definition}
\title{Dirac's theorem on chordal graphs implies Brooks' theorem}
\author{Carl Feghali\thanks{Univ Lyon, EnsL, CNRS, LIP, F-69342, Lyon Cedex 07, France, email: \texttt{carl.feghali@ens-lyon.fr}}}
\date{}
\begin{document}
\maketitle

\begin{abstract}
We give yet another proof of the list-color version of Brooks' theorem that is due, independently, to Vizing and to Erd\H{o}s, Rubin and Taylor, via a famous theorem of Dirac on chordal graphs. 
 \end{abstract}

Let $G$ be a graph, and let $k$ be a non-negative integer. 
A (proper) \emph{$k$-coloring} of $G$ is a function $\varphi: V(G) \rightarrow \{1, \dots, k\}$ such that $\varphi(u) \neq \varphi(v)$ if $uv \in E(G)$. A \emph{list assignment} of a graph is a function $L$ that assigns to
each vertex $v $ a list $L(v)$ of colors. The graph $G$ is \emph{$L$-colorable} if it has a
proper coloring $f$ such that $f(v) \in L(v)$ for each vertex $ v $ of  $G$. 

A classical theorem of Brooks  in graph coloring theory draws a connection between the maximum degree of a graph and its chromatic number. 

\begin{theorem}[\cite{brooks}]\label{conj}
Let $\Delta \geq 3$ be an integer. Then every connected graph $G \not= K_{\Delta + 1}$ with maximum degree $\Delta$
 has a $\Delta$-coloring.\end{theorem}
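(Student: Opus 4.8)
The plan is to prove Theorem~\ref{conj} by two easy reductions followed by one structural lemma, and it is the lemma whose proof will call on Dirac's theorem on chordal graphs; the same scheme, suitably refined, is also what gives the list-colouring strengthening of Vizing and Erd\H{o}s--Rubin--Taylor that is the real subject of the paper. \emph{Reduction 1.} If some vertex $u$ of $G$ has $\deg_G(u)<\Delta$, order $V(G)$ by non-increasing distance from $u$ (finite as $G$ is connected) and colour greedily from $\{1,\dots,\Delta\}$: then $u$ is last, each other vertex has a neighbour further along and hence at most $\Delta-1$ earlier neighbours, and $u$ has at most $\Delta-1$ neighbours, so a colour is always free. \emph{Reduction 2.} If $G$ is $\Delta$-regular with a cut vertex $w$, then each component $C$ of $G-w$ contains a neighbour of $w$, so $G[C\cup\{w\}]$ is connected, has maximum degree at most $\Delta$, and has $w$ of degree less than $\Delta$; by Reduction 1 it has a $\Delta$-colouring, and after permuting colours so that these agree on $w$ they glue to a $\Delta$-colouring of $G$. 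This leaves the case of a $2$-connected $\Delta$-regular graph $G\ne K_{\Delta+1}$ with $\Delta\ge 3$.

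The core is a lemma: such a $G$ has a vertex $b$ with two non-adjacent neighbours $a,c$ for which $G-\{a,c\}$ is connected. Given the lemma, give $a$ and $c$ the colour $1$, order $V(G)\setminus\{a,c\}$ by non-increasing distance from $b$ in $G-\{a,c\}$ (so $b$ is last), and colour greedily: a vertex other than $b$ has a later neighbour in $G-\{a,c\}$, so at most $\Delta-1$ of its $G$-neighbours are coloured before it; and $b$'s $\Delta$ neighbours are all coloured but $a,c$ repeat colour $1$, so at most $\Delta-1$ colours appear on $N(b)$. Hence a colour from $\{1,\dots,\Delta\}$ is always available. If $G$ is $3$-connected the lemma is immediate: $G$ is not complete, so some vertex $b$ has non-adjacent neighbours $a,c$, and removing any two vertices from a $3$-connected graph leaves it connected.

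The real work, and what I expect to be essentially the entire difficulty, is the case $\kappa(G)=2$. Here I would choose a $2$-cut $\{x,y\}$ and a component $C$ of $G-\{x,y\}$ with $|C|$ as small as possible; a short minimality argument --- any small piece split off by a $2$-cut of $G[C\cup\{x,y\}]$ that avoids both $x$ and $y$ would be a strictly smaller component cut off in $G$ --- shows that, after adding the edge $xy$ if it is absent, every $2$-cut of $G[C\cup\{x,y\}]$ separates $x$ from $y$, so this graph has a chain-like structure running from $x$ to $y$. I expect this is where Dirac's theorem is used: from a chordal graph read off this chain one extracts two non-adjacent simplicial vertices, and pulling them back into $G$ yields $b$ together with non-adjacent neighbours $a,c$, with $G-\{a,c\}$ still connected because every other component of $G-\{x,y\}$ continues to join $x$ to $y$. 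Everything but this last step is routine; pinning down the triple in the $2$-connected, non-$3$-connected case is the obstacle.
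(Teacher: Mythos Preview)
Your outline is the classical Lov\'asz proof of Brooks' theorem, not the paper's argument, and your guess about where Dirac's theorem enters is off. In the paper, Dirac's theorem is invoked only through Lemma~\ref{lem}: a chordal graph with clique number $\omega$ is $L$-colourable whenever every list has size at least $\omega$. Hence a minimum counterexample $G$ cannot be chordal and so contains a chordless cycle $C=x_1x_2\cdots x_kx_1$ with $k\ge4$. The paper then deletes all of $C$ except three consecutive vertices $x_1,x_2,x_3$, adds the chord $x_1x_3$, argues that for at least one of the two possible choices of triple the resulting smaller graph contains no $K_{\Delta+1}$, colours it by minimality, and finally extends the colouring around $C$ by a short list argument exploiting that $x_1,x_2,x_3$ form a triangle in the auxiliary graph. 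There are no connectivity reductions, no regularity assumption, and no search for a vertex with two non-adjacent neighbours whose deletion keeps the graph connected.

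Your scheme is valid for the ordinary chromatic number up to the $\kappa(G)=2$ case, which you explicitly leave unfinished; the speculated role of Dirac's theorem there---reading simplicial vertices off a chordal graph built from a chain of $2$-cuts---is not what the paper does, and you would still owe a concrete argument producing the triple $(a,b,c)$. Note also that your Reduction~2 permutes colours so the pieces agree at the cut vertex; that manoeuvre does not survive in the list setting you claim the same scheme handles, whereas the paper's proof is written directly for arbitrary lists of size $\Delta$.
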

 
Many new proofs (see, for example, \cite{cranston, rabern, zajkac}) extensions and generalizations (see, for example, \cite{cranston, krivelevich, matamala}) of Theorem \ref{conj} have appeared.

 In this note, we give another proof that uses a theorem of Dirac on chordal graphs. A graph is \emph{chordal} if every chordless cycle of the graph is a triangle. A \emph{perfect elimination ordering} of a graph $G$ is an ordering $v_1, \dots, v_n$ of $V(G)$ such that, for each $i \in [n]$, the graph induced by $\{v_j: j < i, v_iv_j \in E(G)\}$ is a clique. Dirac \cite{dirac} famously showed that a graph is chordal if, and only if, it admits a perfect elimination ordering. This immediately implies

\begin{lemma}\label{lem}
Let $G$ be a chordal graph with clique number $\omega$. Let $L$ be a list assignment of $G$ such that $|L(v)| \geq \omega$ for each $v \in V(G)$. Then $G$ has an $L$-coloring. 
\end{lemma}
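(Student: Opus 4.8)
The plan is to exploit the perfect elimination ordering guaranteed by Dirac's characterization, together with a one-pass greedy coloring.

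First, I would invoke Dirac's theorem (quoted in the excerpt) to obtain a perfect elimination ordering $v_1, \dots, v_n$ of $V(G)$. The structural fact I will rely on is that, for each $i \in [n]$, the set $N_i := \{v_j : j < i,\ v_iv_j \in E(G)\}$ induces a clique.

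Next, I would color the vertices one at a time in the order $v_1, v_2, \dots, v_n$, maintaining the invariant that after step $i-1$ the colors chosen form a proper $L$-coloring of the subgraph induced on $\{v_1, \dots, v_{i-1}\}$. When it is time to color $v_i$, the already-colored neighbours of $v_i$ are exactly the vertices of $N_i$. Since every vertex of $N_i$ is adjacent to $v_i$ and $N_i$ is a clique, the set $N_i \cup \{v_i\}$ is a clique of $G$, so $|N_i| \le \omega - 1$. Consequently at most $\omega - 1$ colors are forbidden at $v_i$, and because $|L(v_i)| \ge \omega$ there remains a color in $L(v_i)$ available for $v_i$; assign it. Taking $i$ up to $n$ produces the required $L$-coloring.

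I do not expect any genuine obstacle: the entire content is the observation that $v_i$ together with its earlier neighbours forms a clique, which is immediate from the definition of a perfect elimination ordering, and this is exactly the point at which Dirac's theorem is used. The only mild care needed is to phrase the greedy step as a clean induction on $i$ so that the invariant is transparently preserved.
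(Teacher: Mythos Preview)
Your argument is correct and is essentially the same as the paper's: both invoke Dirac's theorem to obtain a perfect elimination ordering and then color greedily in that order, using that each $v_i$ has at most $\omega-1$ already-colored neighbours. Your write-up is just a slightly more explicit version of the paper's two-sentence proof.
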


\begin{proof}
Let  $v_1, \dots, v_n$ be a perfect elimination ordering of $G$; color the vertices of $G$ greedily in this order (starting with $v_1$ and moving towards $v_n$). This is possible since the number of already colored neighbors of a still uncolored vertex $v$ is always less than $|L(v)|$. 
\end{proof}

We are now ready to prove Theorem \ref{conj}; in fact, we prove its following more general list-color version due, independently, to Vizing \cite{vizing} and to Erd\H{o}s, Rubin and Taylor \cite{erdos}.

\begin{theorem}
Let $\Delta \geq 3$ and let $G \not= K_{\Delta+1}$ be a connected graph with maximum degree $\Delta$. If $L$ is a list assignment of $G$ such that $|L(v)| \geq \Delta$ for each $v \in V(G)$, then $G$ has an $L$-coloring. 
\end{theorem}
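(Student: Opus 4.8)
The plan is to argue by induction on $n = |V(G)|$, using Lemma~\ref{lem} (i.e.\ Dirac's theorem) for the ``chordal part'' and an elementary greedy step for the rest. First I would note that we may assume $|L(v)| = \Delta$ for every $v$, since shrinking lists only makes the task harder. If $G$ is chordal, I am immediately done: a connected graph of maximum degree $\Delta$ different from $K_{\Delta+1}$ cannot contain a clique on $\Delta+1$ vertices (such a clique would already be a connected component), so $\omega(G) \le \Delta$ and Lemma~\ref{lem} produces an $L$-colouring. In particular every complete graph $K_r$ with $r \le \Delta$ is handled, and from now on I may assume $G$ is not chordal.

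Second, I would reduce to the case that $G$ is $2$-connected. If $c$ is a cut-vertex, pick a component $A$ of $G - c$; the graph $G - A$ is connected, smaller, and not $K_{\Delta+1}$ (else $c$ would have degree at least $\Delta + 1$), so by induction it has an $L$-colouring, which assigns some colour $\gamma$ to $c$. Now $c$ has degree at most $\Delta - 1$ in $G[A \cup \{c\}]$ (it has a neighbour in another component of $G-c$), so after deleting $c$ and removing $\gamma$ from the lists of its neighbours, every remaining vertex has a list at least as large as its degree; a further induction completes $G[A \cup \{c\}]$ with $c$ pre-coloured $\gamma$. To make this induction run smoothly one should really carry the Vizing--Erd\H{o}s--Rubin--Taylor hypothesis ``$|L(v)| \ge d_G(v)$ for all $v$, and $G$ is neither complete nor an odd cycle'' through the argument; the stated theorem is then its special case.

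So suppose $G$ is $2$-connected and, by the first step, not complete. Here I would invoke a standard structural fact from proofs of Brooks' theorem: there exist vertices $x, y, z$ with $xy, yz \in E(G)$, $xz \notin E(G)$, and $G - x - z$ connected. I would first colour $x$ and $z$ so as to leave $y$ ``over-supplied'': if $L(x) \cap L(z) \ne \emptyset$, give $x$ and $z$ a common colour; otherwise $L(x) \cup L(z)$ has $2\Delta > \Delta = |L(y)|$ colours, so $L(x)$ and $L(z)$ cannot both be contained in $L(y)$, and one may pick $\varphi(x) \in L(x)$, $\varphi(z) \in L(z)$ with at most one of the two colours lying in $L(y)$. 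In either case at most $\Delta - 1$ colours of $L(y)$ are now forbidden at $y$. Finally, I would order $V(G) \setminus \{x, z\}$ as a breadth-first search of $G - x - z$ rooted at $y$ read backwards (so $y$ comes last) and colour greedily: every vertex other than $y$ has a still-uncoloured neighbour (its BFS parent), hence at most $\Delta - 1$ forbidden colours, and $y$ has at most $\Delta - 1$ forbidden colours by the choice above; since each list has size $\Delta$, the greedy procedure never fails.

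I expect the $2$-connectivity reduction to be the main obstacle: the structural lemma yielding a suitable triple $x, y, z$ requires a short case analysis (according to whether $G$ is $3$-connected or has a $2$-separator), and keeping the list sizes under control when a cut-vertex is pre-coloured is delicate enough that the clean route is to state the stronger degree-list version as the inductive statement. Everything else is powered by Lemma~\ref{lem} together with the one-line greedy argument.
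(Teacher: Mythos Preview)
Your route is essentially the classical Erd\H{o}s--Rubin--Taylor argument, and it is genuinely different from the paper's. The paper never reduces to the $2$-connected case and never invokes the structural triple $x,y,z$. Instead, once an induced cycle $C=x_1\cdots x_kx_1$ with $k\ge 4$ has been found, it deletes $x_4,\dots,x_k$ from $G$ and adds the chord $x_1x_3$, obtaining a strictly smaller graph $F$ of maximum degree~$\Delta$; a two-line argument shows that $F$ (or the symmetric variant $H$ built on $x_2,x_3,x_4$) still avoids $K_{\Delta+1}$, so minimality yields an $L$-colouring of $F$, which is then extended along $C$ by an explicit list argument on the cycle. Dirac's theorem is the engine here: it supplies the long induced cycle, and everything else is an elementary cycle list-colouring. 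In your plan, by contrast, Lemma~\ref{lem} only disposes of the chordal case and plays no further role.

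You are right that the cut-vertex reduction is the sore point, but the fix you propose does not work as stated. The degree-list statement you want to carry inductively is false under the hypothesis ``$G$ is neither complete nor an odd cycle'': two triangles sharing a vertex is neither, yet it is not degree-choosable (give the centre the list $\{1,2,3,4\}$ and the two leaves of each triangle the lists $\{1,2\}$ and $\{3,4\}$ respectively). The correct obstruction is that every \emph{block} of $G$ is a clique or an odd cycle, i.e.\ $G$ is a Gallai tree. With that correction the induction does go through, but then you are really proving the full Borodin/ERT degree-choosability theorem; the chordal case is subsumed by the general argument, and Lemma~\ref{lem} becomes ornamental. So while your outline can be completed to a valid proof, it does not realise the paper's point, which is precisely that Dirac's theorem alone drives a short self-contained argument without the $2$-connectivity machinery.
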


 \begin{proof}
Let $G$ be a counterexample with $|V(G)|$ as small as possible. Then $G$ is not chordal by Lemma \ref{lem}. Consider a chordless cycle $C = x_1x_2 \dots x_kx_1$ with $k \geq 4$ in $G$. Let $F = G - (C -  \{x_1, x_2, x_3\}) + (x_1, x_3)$ and $H = G - (C - \{x_2, x_3, x_4\}) + (x_2, x_{4})$.  If both $F$ and $H$ have a $K_{\Delta + 1}$, then as any $K_{\Delta+1}$ must use an added edge, all neighbors of $x_2$ outside $C$ are pairwise adjacent, and adjacent to $x_1, x_2, x_3, x_4$, which is impossible since otherwise their degrees would exceed $\Delta$. Thus, $F$ or $H$, say $F$, has no $K_{\Delta +1}$. 

Let $L'$ be the restriction of $L$ to $V(F)$. By minimality, $F$ has an $L'$-coloring $f$. Let $L^*$ be the list assignment for $C$ defined by $L^*(x_i) = L(x_i) \setminus \{f(u): (u, x_i) \in E(G) - E(C)\}$ for $i \in [k]$. Then $|L^*(x_i)| \geq 2$ for $i \in [k]$, and if $|L^*(x_1)| = |L^*(x_2)| = L^*(x_3)| = 2$, then $L^*(x_1)$, $L^*(x_2)$, $L^*(x_3)$ are not pairwise equal, otherwise $f$ would not exist as $x_1, x_2, x_3$ form a triangle in $F$. We can thus assume, without loss of generality, that $L^*(x_1)$ has a color $c_1$ such that $|L^*(x_2) \setminus c_1| \geq 2$. Then the restriction of $f$ to $G - C$ can be extended to an $L$-coloring of $G$ by giving $x_1$ the color $c_1$, $x_k$ a color $c_k \in L^*(x_k) \setminus \{c_1\}$, for $i = k-1, \dots, 3$ in order, $x_{i}$ a color $c_i \in L^*(x_i) \setminus \{c_{i+1}\}$ and $x_2$ a color $c_2 \in L^*(x_2) \setminus \{c_1, c_3\}$, which is a contradiction. \end{proof}

\noindent
\textbf{Acknowledgements.} I am grateful to Daniel W. Cranston for feedback on a previous version of this manuscript and to Dibyayan Chakraborty and Benjamin Moore for a helpful discussion.  This work was supported by the French National Research Agency under research grant ANR DIGRAPHS ANR-19-CE48-0013-01

 \bibliography{bibliography}{}
\bibliographystyle{abbrv}
 
\end{document}